\theoremstyle{plain}
\newtheorem{theorem}{Theorem}[section]
\newtheorem{corollary}{Corollary}[section]
\newtheorem{remark}{Remark}[section]
\title[Complete self-shrinkers]
{Complete self-shrinkers \\ of the mean curvature flow*}
\author {Qing-Ming Cheng and Yejuan Peng}
\address{Qing-Ming Cheng \\  \newline \indent Department of Mathematics, Graduate School of Science and Engineering, \newline \indent Saga University, Saga 840-8502, Japan. cheng@ms.saga-u.ac.jp}
\address{Yejuan Peng \\  \newline \indent Department of Mathematics,  Graduate School of Science and Engineering \\  \newline \indent Saga University, Saga 840-8502,  Japan. yejuan666@gmail.com}
\begin{document}
\maketitle

\begin{abstract}
It is our purpose to study complete self-shrinkers in Euclidean space. 
By introducing a generalized maximum
principle for $\mathcal{L}$-operator, we  give estimates on supremum 
and infimum  of  the squared norm of the second fundamental form 
of self-shrinkers without  assumption on \emph{polynomial volume
growth}, which is assumed in Cao and Li \cite{CL1}. Thus, we can obtain
the rigidity theorems on complete self-shrinkers  without  assumption 
on \emph{polynomial volume growth}.
For complete proper self-shrinkers of dimension 2 and 3,
we give a classification of them under assumption of constant
squared norm of the second fundamental form.

\end{abstract}
\footnotetext{{\it Key words and phrases}: mean curvature flow,
self-shrinkers, the differential operator $\mathcal{L}$}
\footnotetext{2001 \textit{Mathematics Subject Classification}:
53C44, 53C42.}

\footnotetext{* Research partially Supported by a Grant-in-Aid for
Scientific Research from JSPS.}

\section{introduction}
\noindent  
The mean curvature flow is a well known geometric evolution equation.
The study of the mean curvature from the perspective of partial differential
equations commenced with Huisken's paper  \cite{H1}  on the flow of convex
hypersurfaces. Now the study of the mean curvature flow of submanifolds of higher
codimension has started to receive attentions.

\vskip 1mm\noindent 
One of the most important problems in the  mean curvature flow is to understand
the possible singularities that the flow goes through. Singularities are unavoidable
as the flow contracts any closed embedded submanifold in Euclidean space
eventually leading to extinction of the evolving submanifold. A key starting point
for singularity analysis is Huisken's monotonicity formula because the monotonicity
implies that the flow is asymptotically self-similar near a given singularity and 
thus, is modeled  by  self-shrinking solutions of the flow.

\vskip 1mm\noindent 
Let  $X: M^n\to \mathbb{R}^{n+p}$  be an $n$-dimensional submanifold in the $n+p$-dimensional 
Euclidean space $\mathbb{R}^{n+p}$. If the position vector $X$ evolves in the direction of the mean
curvature $H$, then it gives rise  to a solution to the mean curvature flow:
$$
F(\cdot, t):M^n\to  \mathbb{R}^{n+p}
$$
satisfying $F(\cdot, 0)=X(\cdot)$ and 
\begin{equation}
\dfrac{\partial F(p,t)}{\partial t}=H(p,t), \quad (p,t)\in M\times [0,T),
\end{equation}
where $H(p,t)$ denotes the mean curvature vector  of submanifold $M_t=F(M^n,t)$ at point $F(p,t)$.
The equation (1.1) is called the mean curvature flow equation.
A submanifold $X: M^n\to \mathbb{R}^{n+p}$ is said to be a self-shrinker in $\mathbb{R}^{n+p}$ if it satisfies
\begin{equation}
H=-X^N,
\end{equation}
where 
$X^N$ denotes the orthogonal projection of $X$ into the normal
bundle of $M^n$ (cf. Ecker-Huisken \cite{EH}).

\vskip 1mm\noindent 
U. Abresch and J. Langer
\cite {AL} gave a complete classification of all  self-shrinkers of 
dimension one, that is, self-shrinkers are  curve.
These curves are now called Abresch-Langer curves.

\vskip 1mm\noindent 
In the hypersurface case,  Huisken \cite{H2,H3} proved a classification theorem 
that the only possible smooth self-shrinkers $M^n$ in $\mathbb{R}^{n+1}$ with non-negative
mean curvature, bounded $|A|$, and polynomial volume growth are
isometric to $\Gamma\times\mathbb{R}^{n-1}$ or
$S^k(\sqrt k)\times\mathbb{R}^{n-k}(0\leq k\leq n)$. Here, $\Gamma$ is a
Abresch-Langer curve and $S^k(\sqrt k)$ is a $k$-dimensional sphere.
Colding and  Minicozzi \cite{CM1} showed that Huisken's classification theorem 
still holds without the assumption
that $|A|$ is \emph{bounded}. Furthermore, they showed that the only
smooth embedded entropy stable self-shrinkers with polynomial volume
growth in $\mathbb{R}^{n+1}$ are the  hyperplane $\mathbb{R}^{n}$, 
the  sphere $S^n(\sqrt{n})$ and
the cylinders $S^m(\sqrt{m})\times\mathbb{R}^{n-m},\ 1\leq m\leq n-1$.  Kleene-M${\o}$ller \cite{KM}  classified  complete embedded self-shrinkers of
revolution  in $\mathbb{R}^{n+1}$.
Based on an identity of Colding and Minicozzi
(see (9.42) in \cite {CM1}),  Le and Sesum \cite{LS} proved a
gap theorem on the squared norm of the second fundamental form for self-shrinkers of codimension one:

\vskip 2mm \noindent {\bf Theorem A}  (Le and Sesum \cite{LS}).  Let $M^n$ be 
an $n$-dimensional  complete embedded
self-shrinker without boundary and with polynomial volume growth in  
$\mathbb{R}^{n+1}$. If the squared norm  $|A|^2$ of the second
fundamental form satisfies $|A|^2<1$ , then $M^n$ is a hyperplane.

\vskip 1mm\noindent 
In the higher  codimension case, K. Smoczyk in \cite{S1}
proved that let $M^n$ be a complete  self-shrinker with
$H\neq 0$ and  with parallel  principal normal vector $\nu=H/|H|$  in the normal
bundle, if  $M^n$  has uniformly bounded geometry, then 
$M^n$ must be $\Gamma\times \mathbb{R}^{n-1}$ or
$\tilde{M}^r\times  \mathbb{R}^{n-r}$. Here $\Gamma$ is an
Abresch-Langer curve and $\tilde{M}$ is a minimal submanifold in
sphere. Very recently, Li and Wei \cite{LW} have proved this result
in a weaker condition.
Furthermore, Cao and Li \cite{CL1} extended  the 
classification theorem  for self-shrinkers in Le and Sesum \cite{LS} to  arbitrary codimension,
and proved the following

\vskip 2mm\noindent {\bf Theorem B} (Cao and Li \cite{CL1}). 
Let $M^n$ be 
an $n$-dimensional  complete 
self-shrinker without boundary and with polynomial volume growth in  
$\mathbb{R}^{n+p} \ (p\geq 1)$. If the squared norm  $|A|^2$ of the second
fundamental form satisfies $|A|^2\leq 1$,
then $M^n$ is one of the followings:
\begin{enumerate}
\item a round sphere $S^n(\sqrt{n})$
in $\mathbb{R}^{n+1}$,
\item a cylinder
$S^m(\sqrt{m})\times\mathbb{R}^{n-m},\quad 1\leq m\leq n-1$
in $\mathbb{R}^{n+1}$,
\item a hyperplane in $\mathbb{R}^{n+1}$.
\end{enumerate}

\vskip 1mm \noindent 
We should remark that, in proofs of the above theorems for complete and non-compact self-shrinkers,
integral formulas are exploited as a main method. In order to guarantee that the integration by part holds,
the condition of \emph{polynomial volume growth} plays a very important role. 
Moreover,  Cao and Li \cite {CL1} have asked  whether it is possible to
remove the assumption on  \emph{polynomial volume growth} in their theorem.

\vskip 1mm \noindent \noindent
In this paper,  our purpose is to study  complete self-shrinkers without 
the assumption on \emph{polynomial volume growth}. In order to do
it, we extend the generalized maximum principle of Yau to 
$\mathcal{L}$-operator (see Theorem 3.1). By making use of 
the generalized maximum principle for $\mathcal{L}$-operator, 
we prove the following:
\begin{theorem}
Let $X:M^n\rightarrow\mathbb{R}^{n+p} \ (p\geq 1)$ be an
$n$-dimensional complete self-shrinker without boundary in
$\mathbb{R}^{n+p}$, then one of the following holds:
\begin{enumerate}
\item $\sup |A|\geq 1$,
\item $|A|\equiv 0$, i.e.
$M^n$ is a hyperplane in $\mathbb{R}^{n+1}$.
\end{enumerate}
\end{theorem}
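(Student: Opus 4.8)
The plan is to combine the generalized maximum principle for the drift operator $\mathcal{L}=\Delta-\langle X,\nabla\,\cdot\,\rangle$ (Theorem 3.1) with a Simons-type identity for the squared norm of the second fundamental form. First I would record the fundamental identity, obtained by commuting covariant derivatives via Codazzi and substituting the self-shrinker equation $H=-X^N$, in the spirit of Colding--Minicozzi and Cao--Li, in the form
\begin{equation*}
\tfrac12\,\mathcal{L}|A|^2 = |\nabla A|^2 + |A|^2 - Q,
\end{equation*}
where $Q$ is a quartic expression in the components of $A$. In codimension one $Q=|A|^4$ exactly, while in higher codimension $Q=\sum_{\alpha,\beta}\bigl(\sum_{ij}h^\alpha_{ij}h^\beta_{ij}\bigr)^2+\sum_{\alpha,\beta}\|[H_\alpha,H_\beta]\|^2$ collects the tangential and normal-curvature contributions.

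Next I would argue by contradiction: suppose $\sup|A|<1$ and set $a:=\sup|A|^2<1$. Since $|A|$ is then bounded, the Gauss equation places the sectional curvatures, hence $\mathrm{Ric}$, under control, so the hypotheses of the generalized maximum principle for $\mathcal{L}$ are met. Applying Theorem 3.1 to the bounded function $f=|A|^2$ furnishes a sequence $\{x_k\}$ with
\begin{equation*}
|A|^2(x_k)\to a,\qquad |\nabla f|(x_k)\to 0,\qquad \limsup_{k\to\infty}\mathcal{L}f(x_k)\le 0.
\end{equation*}
The point of using the $\mathcal{L}$-version rather than Yau's classical principle is precisely that on a non-compact self-shrinker $|X|$ may be unbounded along $\{x_k\}$, so the drift term $\langle X,\nabla f\rangle$ need not vanish even though $|\nabla f|\to 0$; Theorem 3.1 is what absorbs this, and it is here that the removal of the polynomial-volume-growth hypothesis is bought.

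Then I would estimate $Q$ from above by $|A|^4$, so that at the points of the sequence $|A|^2-Q\ge |A|^2(1-a)\ge 0$, using $|A|^2\le a$ everywhere. Feeding this together with $|\nabla A|^2\ge 0$ into the identity gives
\begin{equation*}
0\ge \limsup_{k\to\infty}\tfrac12\,\mathcal{L}f(x_k)\ge (1-a)\,\limsup_{k\to\infty}|A|^2(x_k)=(1-a)\,a,
\end{equation*}
and since $1-a>0$ this forces $a=0$, i.e. $|A|\equiv 0$. A complete self-shrinker with $|A|\equiv 0$ is totally geodesic, and $H=0$ together with $H=-X^N$ forces $X^N\equiv 0$, so the affine $n$-plane passes through the origin; this is alternative (2).

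The main obstacle is the pointwise bound $Q\le |A|^4$. This is immediate and sharp when $p=1$, and the argument then yields the clean threshold $1$. In codimension $p\ge 2$ the commutator term $\sum_{\alpha,\beta}\|[H_\alpha,H_\beta]\|^2$ is genuinely present, and the naive algebraic inequality only gives $Q\le \tfrac32|A|^4$, which would degrade the threshold to $\tfrac23$; retaining the value $1$ requires exploiting the finer structure of $Q$ (the interplay between $\sum_{\alpha,\beta}(\mathrm{tr}\,H_\alpha H_\beta)^2$ and the normal-curvature term) in conjunction with the self-shrinker equation. I expect this estimate, rather than the maximum-principle machinery, to be where the real work lies; the remainder is a routine assembly of the identity, the curvature bound, and Theorem 3.1.
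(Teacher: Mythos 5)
Your proposal has a genuine gap exactly where you suspect it: the pointwise estimate $Q\le|A|^4$ is available only in codimension one. For $p\ge 2$ the quartic term in the Simons-type identity contains both $\sum_{\alpha,\beta}\bigl(\sum_{i,j}h^\alpha_{ij}h^\beta_{ij}\bigr)^2$ and the normal-curvature commutator term, and the best general algebraic bound degrades the constant (as you note, to something like $\tfrac32|A|^4$, i.e.\ a threshold of $\tfrac23$ rather than $1$). You flag this as ``where the real work lies'' but do not supply the estimate, and no such pointwise estimate with constant $1$ is known to hold; even Cao--Li, who do work with $|A|^2$, need integral methods and polynomial volume growth to handle the higher-codimension terms. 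So as written your argument proves the theorem only for $p=1$, while the statement claims all $p\ge 1$.

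The paper sidesteps this entirely by applying the generalized maximum principle to $|H|^2$ instead of $|A|^2$. From the self-shrinker equation one computes
\begin{equation*}
\mathcal{L}|H|^2=2|\nabla H|^2+2|H|^2-2\sum_{\alpha,\beta,i,k}H^\alpha H^\beta h^\alpha_{ik}h^\beta_{ik},
\end{equation*}
and here Cauchy--Schwarz gives the clean bound $\bigl|\sum H^\alpha H^\beta h^\alpha_{ik}h^\beta_{ik}\bigr|\le|A|^2|H|^2$ \emph{in every codimension}, yielding $\mathcal{L}|H|^2\ge 2|\nabla H|^2+2(1-|A|^2)|H|^2$ with the sharp constant $1$ and no commutator term to fight. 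Assuming $\sup|A|^2<1$, the Gauss equation bounds Ricci from below, $|H|^2\le n|A|^2$ is bounded, and Theorem 3.1 applied to $|H|^2$ forces $\sup|H|^2=0$, i.e.\ $H\equiv 0$; then $X^N\equiv 0$ makes $M^n$ a smooth minimal cone, hence a hyperplane with $|A|\equiv 0$. Note this last step also differs from yours: the paper concludes $|A|\equiv 0$ from $H\equiv 0$ via the cone structure, not from vanishing of $\sup|A|^2$ along the maximizing sequence. If you want to salvage your route, restrict it to $p=1$ (where it is correct and essentially coincides with the paper's equation (3.9)); for general $p$ you should switch the test function from $|A|^2$ to $|H|^2$.
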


\begin{corollary}
Let $X:M^n\rightarrow \mathbb{R}^{n+p} \ (p\geq 1)$ be a complete
self-shrinker without boundary, and satisfy
$$ \sup |A|^2< 1.$$
Then $M$ is a hyperplane in $\mathbb{R}^{n+1}.$
\end{corollary}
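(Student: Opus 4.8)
The plan is to obtain Corollary 1.1 directly from Theorem 1.1, since once the theorem is available no further analytic input is required. The one point worth isolating is that, because $|A|\geq 0$ pointwise and $t\mapsto t^2$ is increasing on $[0,\infty)$, one has $\sup|A|^2=\bigl(\sup|A|\bigr)^2$; hence the hypothesis $\sup|A|^2<1$ is exactly the statement $\sup|A|<1$. Reformulating the assumption in terms of $|A|$ rather than $|A|^2$ is precisely what allows it to be matched against the dichotomy in Theorem 1.1.

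I would then argue by elimination. Theorem 1.1 guarantees that for a complete self-shrinker without boundary at least one of the two alternatives holds: either $\sup|A|\geq 1$, or $|A|\equiv 0$, in which case $M^n$ is a hyperplane in $\mathbb{R}^{n+1}$. The reformulated hypothesis $\sup|A|<1$ directly contradicts the first alternative, so it cannot occur; the second alternative is therefore forced, giving $|A|\equiv 0$ and identifying $M^n$ as a hyperplane. This completes the argument.

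Since Theorem 1.1 may be assumed, the Corollary carries no independent obstacle of its own: the substance has been absorbed into the theorem, and the deduction above is purely logical. For orientation I record where the genuine difficulty lies, as the Corollary inherits its conclusion from exactly this mechanism. I expect Theorem 1.1 to be proved by applying the generalized maximum principle for the operator $\mathcal{L}$ (Theorem 3.1) to the function $|A|^2$, after computing $\mathcal{L}|A|^2$ by a Simons-type identity adapted to self-shrinkers. The critical gain over the integral methods of Cao and Li is that the generalized maximum principle delivers a pointwise inequality along a near-maximizing sequence for $|A|^2$ without any appeal to polynomial volume growth; the strict bound $|A|^2<1$ is then what converts that inequality into the conclusion $|A|^2\equiv 0$, rather than merely $\sup|A|^2=1$.
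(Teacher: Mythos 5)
Your proof is correct and follows exactly the route the paper intends: the corollary is an immediate logical consequence of Theorem 1.1, since $\sup|A|^2<1$ is equivalent to $\sup|A|<1$ and this rules out the first alternative of the dichotomy, forcing $|A|\equiv 0$. One immaterial side remark: your guess that Theorem 1.1 is proved by applying the generalized maximum principle to $|A|^2$ is slightly off --- the paper applies it to $|H|^2$ via the identity $\mathcal{L}|H|^2=2|\nabla H|^2+2|H|^2-2\sum_{\alpha,\beta,i,k}H^\alpha H^\beta h^\alpha_{ik}h^\beta_{ik}\geq 2|\nabla H|^2+2(1-|A|^2)|H|^2$ --- but since you assume Theorem 1.1 as given, this does not affect the validity of your deduction.
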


\begin{remark}
The round sphere $S^n(\sqrt{n})$  and the cylinder
$S^k(\sqrt{k})\times \mathbb{R}^{n-k}, \, 1\leq k \leq n-1$ are
complete self-shrinkers in $\mathbb{R}^{n+1}$ with $|A|=1$. Thus, our
result  is sharp.
\end{remark}

\begin{theorem}
Let $X:M^n\rightarrow \mathbb{R}^{n+1} $ be a complete
self-shrinker without boundary. If  \ $\inf H^2>0$ and $|A|^2$ is bounded, 
then $ \inf |A|^2\leq  1$. 
\end{theorem}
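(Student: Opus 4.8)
The plan is to argue by contradiction: assume $\inf|A|^2>1$ and derive a contradiction by applying the generalized maximum principle for the $\mathcal{L}$-operator (Theorem 3.1) to the function $H^2$. This application is legitimate precisely because $|A|^2$ is assumed bounded, which is the geometric hypothesis needed to run the Omori--Yau type argument for $\mathcal{L}$. Since we are in codimension one, I would first record the self-shrinker identity
\[
\mathcal{L}H=(1-|A|^2)H,
\]
which follows from the standard computation for the scalar mean curvature $H=\langle X,\nu\rangle$ of a hypersurface self-shrinker (the same computation underlying Theorem A). From $\mathcal{L}(H^2)=2H\,\mathcal{L}H+2|\nabla H|^2$ I then obtain
\[
\mathcal{L}(H^2)=2(1-|A|^2)H^2+2|\nabla H|^2 .
\]

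Suppose now $\inf|A|^2>1$, so there is $\varepsilon>0$ with $|A|^2\ge 1+\varepsilon$ on all of $M^n$. Since $|A|^2$ is bounded and $H^2\ge \inf H^2>0$ is bounded from below, Theorem 3.1 applies to $f=H^2$ and yields a sequence $\{p_k\}$ with $H^2(p_k)\to\inf H^2$, $|\nabla H^2|(p_k)\to 0$ and $\liminf_k\mathcal{L}(H^2)(p_k)\ge 0$. The crucial step is to control the gradient term: because $|\nabla H^2|=2|H|\,|\nabla H|$ and $|H(p_k)|\ge \tfrac12\sqrt{\inf H^2}>0$ for all large $k$ (here the hypothesis $\inf H^2>0$ is essential), the relation $|\nabla H^2|(p_k)\to 0$ forces $|\nabla H|(p_k)\to 0$, hence $|\nabla H|^2(p_k)\to 0$.

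Feeding this into the identity and using $1-|A|^2\le-\varepsilon$ together with $H^2(p_k)\to\inf H^2$, I get
\[
\mathcal{L}(H^2)(p_k)=2(1-|A|^2(p_k))H^2(p_k)+2|\nabla H|^2(p_k)\le -2\varepsilon\,H^2(p_k)+2|\nabla H|^2(p_k),
\]
so that $\limsup_k\mathcal{L}(H^2)(p_k)\le -2\varepsilon\inf H^2<0$. This contradicts $\liminf_k\mathcal{L}(H^2)(p_k)\ge 0$, and therefore $\inf|A|^2\le 1$.

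The main obstacle is not the short final contradiction but rather having the generalized maximum principle for $\mathcal{L}$ in the precise form of Theorem 3.1: one must know that completeness together with the boundedness of $|A|^2$ (which controls the intrinsic geometry through the Gauss equation and tames the drift term $\langle X,\nabla\cdot\rangle$) suffices to produce the desired infimizing sequence for the non-self-adjoint operator $\mathcal{L}$. A secondary point requiring care is the choice of $H^2$ rather than $|A|^2$ as the test function: applying the principle directly to $|A|^2$ fails, since in $\mathcal{L}|A|^2=2|A|^2(1-|A|^2)+2|\nabla A|^2$ the gradient term $2|\nabla A|^2$ has the wrong sign for an infimum argument, whereas for $H^2$ the analogous gradient term is annihilated in the limit by the hypothesis $\inf H^2>0$.
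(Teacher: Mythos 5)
Your proof is correct and follows essentially the same route as the paper: both arguments rest on the identity $\mathcal{L}H=(1-|A|^2)H$ and on applying Theorem 3.1 at the infimum of the mean curvature, with Ricci curvature bounded below via the Gauss equation because $|A|^2$ is bounded. The only cosmetic difference is that the paper applies the principle to $-H$ after normalizing $\inf H>0$, whereas you apply it to $H^2$ (equivalently $-H^2$) and argue by contradiction, using $\inf H^2>0$ to kill the extra gradient term $2|\nabla H|^2$ -- a valid variant that spares you the choice of sign for $H$.
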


\begin{corollary}
Let $X:M^n\rightarrow \mathbb{R}^{n+1} $ be a complete
self-shrinker without boundary. If \ $\inf H^2>0$ and $|A|^2$ is constant, 
then $ |A|^2\equiv 1$ and $M^n$ is  the round sphere $S^n(\sqrt{n})$  
or the cylinder $S^k(\sqrt{k})\times \mathbb{R}^{n-k}, \, 1\leq k \leq n-1$. 
\end{corollary}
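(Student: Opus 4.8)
The plan is to obtain the corollary directly from the two theorems together with an analysis of the equality case. First I would observe that constancy of $|A|^2$ makes it trivially bounded, so Theorem 1.2 applies and yields $\inf|A|^2\le 1$; being constant, $|A|^2\le 1$ everywhere. To invoke Theorem 1.1 I must first rule out the hyperplane: since $H=\mathrm{tr}\,A$, the Cauchy--Schwarz inequality gives $H^2\le n|A|^2$, so $|A|^2\ge \inf H^2/n>0$; hence $|A|\not\equiv 0$ and alternative (2) of Theorem 1.1 is excluded. Therefore $\sup|A|\ge 1$, and constancy forces $|A|^2\ge 1$. Combining the two bounds gives $|A|^2\equiv 1$.

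Second, I would feed this constancy into the Simons-type identity for self-shrinker hypersurfaces (the codimension-one identity of Colding and Minicozzi, cf. (9.42) in \cite{CM1}). With $\mathcal{L}=\Delta-\langle X,\nabla\,\cdot\,\rangle$, one has $\tfrac12\mathcal{L}|A|^2=|\nabla A|^2+|A|^2(1-|A|^2)$. Since $|A|^2$ is constant the left-hand side vanishes, and $|A|^2\equiv1$ kills the last term, leaving $|\nabla A|^2\equiv 0$. Thus the second fundamental form is parallel, $\nabla A\equiv 0$.

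Third, I would invoke the classical classification of complete hypersurfaces of $\mathbb{R}^{n+1}$ with parallel second fundamental form: such a hypersurface is, up to a rigid motion, a hyperplane, a round sphere, or a cylinder $S^k(r)\times\mathbb{R}^{n-k}$. The hypothesis $\inf H^2>0$ again excludes the hyperplane, and forces $k\ge 1$ on a cylinder since the purely Euclidean factor contributes no mean curvature. Finally the self-shrinker equation $H=-X^N$ pins down the radius: evaluating $H=-\langle X,\nu\rangle$ on a sphere $S^n(r)$ forces $r=\sqrt n$, while on the spherical factor of a cylinder it forces $r=\sqrt k$. Checking that both $S^n(\sqrt n)$ and $S^k(\sqrt k)\times\mathbb{R}^{n-k}$ indeed satisfy $|A|^2\equiv1$ and $\inf H^2>0$ confirms consistency and completes the identification.

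Since the two gap theorems already carry the analytic burden, the only genuine work here is the equality analysis and the geometric classification. The main point to handle with care is the passage from $\nabla A\equiv0$ to the explicit list: one must make sure the classical rigidity for parallel second fundamental form is applied correctly in the complete, possibly non-compact setting, and that the self-shrinker normalization selects exactly the stated radii $\sqrt n$ and $\sqrt k$.
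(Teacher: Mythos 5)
Your proposal is correct and follows essentially the same route as the paper: combining Theorems 1.1 and 1.2 (after ruling out the totally geodesic case via $\inf H^2>0$) to force $|A|^2\equiv 1$, then using the codimension-one identity $\tfrac12\mathcal{L}|A|^2=|\nabla A|^2+|A|^2\left(1-|A|^2\right)$ to conclude $\nabla A\equiv 0$, and finally invoking Lawson's rigidity theorem for hypersurfaces with parallel second fundamental form. Your additional remarks on pinning down the radii $\sqrt{n}$ and $\sqrt{k}$ via the self-shrinker equation are a careful supplement, but not a different method.
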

\begin{remark} 
In  \cite{CL1, CM1, H2, H3} and so on, they assume that $M^n$ has polynomial volume growth.
In our results,  we do not assume   the condition on polynomial volume growth.
We should notice that condition $\inf H^2>0$ is necessary since Angenent \cite{A}  has proved that there exist
embedded self-shrinkers from $S^1\times S^{n-1}$ into  $\mathbb{R}^{n+1} $ with $\inf H^2=0$ {\rm(}cf. \cite{KM}{\rm)}.
\end{remark}

\vskip 2mm
\noindent
In  section 4, we shall consider complete proper self-shrinkers
of 2 and 3 dimensions. We will try to classify complete proper self-shrinkers
of 2 and 3 dimensions under condition that  the squared norm of 
the second fundamental form is constant.

\vskip 2mm
\noindent
{\bf Acknowledgements.}  We would like to express our gratitude to 
Professor H. Li  and Dr.  Y.  Wei for many helpful discussions on self-shrinkers
and for sharing their results in \cite {LW} with us prior to publication in arXiv.
We wish to thank professor G.  Wei for useful suggestions.
  
\section{Preliminaries}

\noindent Let $X: M^n\rightarrow\mathbb{R}^{n+p}$ be an
n-dimensional connected submanifold of the (n+p)-dimensional Euclidean space
$\mathbb{R}^{n+p}$. We choose a local orthonormal frame field
$\{e_A\}_{A=1}^{n+p}$ in $\mathbb{R}^{n+p}$ with dual coframe field 
$\{\omega_A\}_{A=1}^{n+p}$, such that, restricted to $M^n$,
$e_1,\cdots, e_n$ are tangent to $M^n$. The following
conventions on the ranges of indices are used in this paper:
$$
1\leq A,B,C,D\leq n+p, \quad 1\leq i,j,k,l\leq n, \quad n+1\leq
\alpha,\beta,\gamma\leq n+p.
$$
Then we have 
\begin{equation*}
dX=\sum_i\limits \omega_i e_i, \quad de_i=\sum_j\limits \omega_{ij}e_j+\sum_{\alpha}\limits
\omega_{i\alpha}e_\alpha
\end{equation*}
and 
\begin{equation*}
de_\alpha=\sum_i\limits\omega_{\alpha i}e_i+\sum_\beta\limits
\omega_{\alpha\beta}e_\beta.
\end{equation*}
We restrict these forms to $M^n$, then
\begin{equation}
\omega_\alpha=0 \quad \text{for}\quad  n+1\leq\alpha\leq n+p
\end{equation}
and the induced Riemannian metric of $M^n$ is written as
$ds^2_M=\sum_i\limits\omega^2_i$.
From (2.1) and Cartan's lemma, we get 
\begin{equation*}
\omega_{i\alpha}=\sum_j h^\alpha_{ij}\omega_j,\quad
h^\alpha_{ij}=h^\alpha_{ji}.
\end{equation*}
The induced structure equations of $M^n$ are given by 
\begin{equation*}
d\omega_{i}=\sum_j \omega_{ij}\wedge\omega_j, \quad  \omega_{ij}=-\omega_{ji},
\end{equation*}
\begin{equation*}
d\omega_{ij}=\sum_k \omega_{ik}\wedge\omega_{kj}-\frac12\sum_{k,l}
R_{ijkl} \omega_{k}\wedge\omega_{l},
\end{equation*}
where 
\begin{equation}
R_{ijkl}=\sum_\alpha\left(h^\alpha_{ik}h^\alpha_{jl}-h^\alpha_{il}h^\alpha_{jk}\right)
\end{equation}
denotes components of the curvature tensor of $M^n$.
The second fundamental form and the mean curvature vector field of $M^n$
are given by 
$$
A=\sum_{\alpha,i,j}
h^\alpha_{ij}\omega_i\otimes\omega_j\otimes e_\alpha
$$ 
and 
$$
\mathbf{H}=\sum_\alpha\limits
H^\alpha e_\alpha=\sum_\alpha\limits \sum_i\limits h^\alpha_{ii}e_\alpha,
$$
respectively.
Let  
$|A|^2=\sum_{\alpha,i,j}\limits(h^\alpha_{ij})^2$ be  the  squared norm
of the second fundamental form and $H=|\mathbf{H}|$ denote  the mean
curvature of $M^n$.
From (2.2),  components of the Ricci curvature  of $M^n$ are given by 
\begin{equation}
R_{ik}=\sum_\alpha H^\alpha
h^\alpha_{ik}-\sum_{\alpha,j}h^\alpha_{ij}h^\alpha_{jk}.
\end{equation}
Let  $R_{\alpha\beta ij}$ denote components of  the normal curvature tensor  
in the normal bundle. We have  Ricci equations:
\begin{equation}
R_{\alpha\beta
kl}=\sum_i\left(h^\alpha_{ik}h^\beta_{il}-h^\alpha_{il}h^\beta_{ik}\right).
\end{equation}
Defining the covariant derivative of $h^\alpha_{ij}$ by
\begin{equation}
\sum_{k}h^\alpha_{ijk}\omega_k=dh^\alpha_{ij}+\sum_k
h^\alpha_{ik}\omega_{kj} +\sum_k h^\alpha_{kj}\omega_{ki}+\sum_\beta
h^\beta_{ij}\omega_{\beta\alpha},
\end{equation}
we obtain the Codazzi equations
\begin{equation}
h_{ijk}^\alpha=h_{ikj}^\alpha.
\end{equation}
By taking exterior differentiation of (2.5), and defining
\begin{equation}
\sum_lh^\alpha_{ijkl}\omega_l=dh^\alpha_{ijk}+\sum_lh^\alpha_{ljk}\omega_{li}
+\sum_lh^\alpha_{ilk}\omega_{lj}+\sum_l
h^\alpha_{ijl}\omega_{lk}+\sum_\beta
h^\beta_{ijk}\omega_{\beta\alpha},
\end{equation}
we have the following Ricci identities:
\begin{equation}
h^{\alpha}_{ijkl}-h^\alpha_{ijlk}=\sum_m
h^\alpha_{mj}R_{mikl}+\sum_m h^\alpha_{im}R_{mjkl}+\sum_\beta
h^\beta_{ij}R_{\beta\alpha kl}.
\end{equation}
Let $f$ be a smooth function on $M^n$, we define the
covariant derivatives $f_i,\, f_{ij},$ and the Laplacian of $f$ as
follows
\begin{equation*}
df=\sum_i f_i \omega_i,\quad \sum_j
f_{ij}\omega_j=df_i+\sum_jf_j\omega_{ji},\quad \Delta f=\sum_i
f_{ii}.
\end{equation*}
The first and second covariant derivatives  of the mean curvature vector field
$\mathbf{H}$
are defined by 
\begin{equation*}
\sum_i H^\alpha_{,i}\omega_i=dH^\alpha+\sum_\beta
H^\beta\omega_{\beta\alpha},
\end{equation*}
\begin{equation*}
\sum_j H^\alpha_{,ij}\omega_j=dH^\alpha_{,i}+ \sum_j H^\alpha_{,j}\omega_{ji}+\sum_\beta
H^\beta_{,i}\omega_{\beta\alpha}.
\end{equation*}
The following elliptic operator
$\mathcal{L}$ introduced by Colding and Minicozzi  in \cite{CM1}
will play a very important role in this paper:
\begin{equation}
\mathcal{L}f=\Delta f-\langle X,\nabla f\rangle
\end{equation}
where $\Delta$ and $\nabla$ denote the Laplacian and the gradient
operator on the self-shrinker, respectively and $\langle
 \cdot,\cdot\rangle$ denotes the standard inner product of
$\mathbb{R}^{n+p}$.  In \cite {CP}, we have studied  eigenvalues
of the $\mathcal{L}$ operator. The sharp universal estimates for 
eigenvalues of the $\mathcal{L}$ operator on compact self-shrinkers
are obtained.

\vskip 1cm
\section{Proof of main results}
\noindent 
In order to  prove  our results, first of all, we prove  the following
generalized maximum principle for $\mathcal{L}$-operator on self-shrinkers:

\begin{theorem} {\rm(}Generalized maximum principle for $\mathcal{L}$-operator {\rm)}
Let $X: M^n\to \mathbb{R}^{n+p}$ {\rm (}$p\geq 1${\rm)} be a complete self-shrinker with Ricci
curvature bounded from below. Let $f$ be any $C^2$-function bounded
from above on this self-shrinker. Then, there exists a sequence of points
$\{p_k\}\subset M^n$, such that
\begin{equation}
\lim_{k\rightarrow\infty} f(X(p_k))=sup f,\quad
\lim_{k\rightarrow\infty} |\nabla f|(X(p_k))=0,\quad
\limsup_{k\rightarrow\infty}\mathcal{L} f(X(p_k))\leq 0.
\end{equation}
\end{theorem}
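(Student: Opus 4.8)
The plan is to adapt Yau's proof of the generalized (Omori--Yau) maximum principle for the Laplacian to the drift operator $\mathcal{L}=\Delta-\langle X,\nabla\,\cdot\,\rangle$. Fix a base point $o\in M^n$ and write $r=d(o,\cdot)$ for the intrinsic distance; since $M^n$ is complete, $r$ is proper by Hopf--Rinow. The one genuinely new point, compared with the classical principle, is that I must control the extra drift term $-\langle X,\nabla\,\cdot\,\rangle$. For this I use the elementary fact that an isometric immersion does not increase distances, so that $|X(p)-X(o)|\le r(p)$ and hence $|X(p)|\le C+r(p)$ with $C=|X(o)|$. This bound, rather than the self-shrinker equation itself, is what makes the drift term manageable.

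Next I would introduce the barrier $\phi(r)=\log(1+r^2)$. It is smooth near $o$ (as $r^2$ is), nonnegative, proper, with $\phi'$ bounded and, crucially, $r\,\phi'(r)$ bounded. A direct computation gives $\mathcal{L}\phi(r)=\phi''|\nabla r|^2+\phi'\,\Delta r-\phi'\langle X,\nabla r\rangle$. Here the hypothesis that the Ricci curvature is bounded from below enters through the Laplacian comparison theorem, which bounds $\phi'\,\Delta r$ from above; and the immersion estimate $|\langle X,\nabla r\rangle|\le|X|\le C+r$, together with the boundedness of $r\,\phi'(r)$, bounds the drift contribution $-\phi'\langle X,\nabla r\rangle$. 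The upshot is a uniform estimate $\mathcal{L}\phi(r)\le C'$ on all of $M^n$, with $C'$ depending only on $n$, the Ricci lower bound, and $|X(o)|$.

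With the barrier in hand, for each $\varepsilon>0$ I would maximize $F_\varepsilon=f-\varepsilon\,\phi(r)$. Since $f$ is bounded above and $\phi(r)\to\infty$, we have $F_\varepsilon\to-\infty$ at infinity, so $F_\varepsilon$ attains its maximum at some $x_\varepsilon\in M^n$. At $x_\varepsilon$ the identity $\nabla f=\varepsilon\,\phi'(r)\nabla r$ gives $|\nabla f|(x_\varepsilon)\le\varepsilon\sup\phi'$, while $\mathcal{L}F_\varepsilon(x_\varepsilon)\le 0$ (the drift term vanishes because $\nabla F_\varepsilon=0$ there) yields $\mathcal{L}f(x_\varepsilon)\le\varepsilon\,\mathcal{L}\phi(r)(x_\varepsilon)\le\varepsilon\,C'$. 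Finally, comparing $F_\varepsilon(x_\varepsilon)$ with $F_\varepsilon(q)$ for points $q$ that nearly realize $\sup f$, and using $\phi\ge 0$, forces $f(x_\varepsilon)\to\sup f$ as $\varepsilon\to 0$. Setting $\varepsilon=1/k$ and $p_k=x_{1/k}$ then delivers the three required limits.

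The hard part is the uniform bound on $\mathcal{L}\phi(r)$, and specifically its drift term. A naive choice such as $\phi(r)=\sqrt{1+r^2}$, with $\phi'\to 1$, would make $-\phi'\langle X,\nabla r\rangle$ grow like $r$ and destroy the estimate; it is precisely the logarithmic growth, for which $r\,\phi'(r)$ stays bounded, that tames it, and this is where the immersion estimate $|X|\le C+r$ is essential. A secondary technical issue is that $r$ is not smooth on the cut locus of $o$; I would deal with this by Calabi's trick, replacing $r$ near $x_\varepsilon$ by a smooth upper support function agreeing with $r$ at $x_\varepsilon$ and satisfying the same Laplacian comparison bound, so that all the inequalities above hold in the barrier sense. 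The remaining steps are routine.
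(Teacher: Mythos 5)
Your proposal is correct, but it takes a genuinely different route from the paper. The paper does not rebuild the Omori--Yau argument at all: it applies the Cheng--Yau generalized maximum principle to $f$ (for the ordinary Laplacian) as a black box to get a sequence $\{p_k\}$ with $f(p_k)\to\sup f$ and $\limsup_k \Delta f(p_k)\le 0$, and then kills the drift term along that \emph{already chosen} sequence. The key point there is that the paper invokes not just the statement of Yau's principle but the quantitative gradient decay internal to its proof, namely
\begin{equation*}
|\nabla f|(X(p_k))=\frac{2\bigl(f(X(p_k))-f(X(p_0))+1\bigr)\gamma(p_k)}{k\bigl(\gamma^2(p_k)+2\bigr)\log\bigl(\gamma^2(p_k)+2\bigr)},
\end{equation*}
which decays faster than $1/\gamma(p_k)$; combined with the same immersion estimate you use, $|X(p_k)|\le\gamma(p_k)+|X(p_0)|$, and Cauchy--Schwarz, this gives $\langle X(p_k),\nabla f(p_k)\rangle\to 0$, hence $\limsup_k\mathcal{L}f(p_k)\le 0$. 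You instead reprove the principle from scratch for the drift operator: an additive logarithmic barrier $\phi(r)=\log(1+r^2)$ whose gradient decays like $1/r$, Laplacian comparison from the Ricci lower bound, the linear-growth bound $|X|\le C+r$, and Calabi's trick at the cut locus. The underlying mechanism is identical in both proofs --- $|X|$ grows at most linearly while the relevant gradient decays super-linearly --- but your version is self-contained, whereas the paper's is shorter at the cost of quoting an estimate that is part of Cheng--Yau's \emph{proof} rather than their theorem statement; your argument also shows the principle holds for any drift vector field of at most linear growth, not just the position vector of a self-shrinker. Your computations check out: $\phi'=2r/(1+r^2)$ and $r\phi'(r)\le 2$ are bounded, the comparison term $\phi'\Delta r\le\phi'(n-1)(\sqrt K+1/r)$ stays bounded as $r\to 0$ and $r\to\infty$, the maximum of $F_\varepsilon=f-\varepsilon\phi(r)$ is attained by properness, the drift of $F_\varepsilon$ vanishes at the critical point, and the value comparison with near-maximizers of $f$ closes the argument.
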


\begin{proof}
\noindent Since this self-shrinker is a complete Riemannian manifold with Ricci
curvature bounded from below and $f$ is a $C^2$-function bounded
from above on it, by the generalized maximum principle of Yau
in \cite{CY}, then, 
there is a sequence of points $\{p_k\}\subset M^n$, such that 
$$
\lim_{k\rightarrow\infty} f(X(p_k))=sup f,
$$
\begin{equation}
\lim_{k\rightarrow\infty}\limits |\nabla f|(X(p_k))
=\lim_{k\rightarrow\infty}\limits
\frac{2(f(X(p_k))-f(X(p_0))+1)\gamma(p_k)}{k(\gamma^2(p_k)+2) \log(\gamma^2(p_k)+2)}=0,
\end{equation}
and
\begin{equation}
\aligned \limsup_{k\rightarrow\infty} \Delta f(X(p_k))
&\leq 0,
\endaligned
\end{equation}
where $\gamma(p)$ denotes the length of the geodesic from a fixed point $X(p_0)$
to $X(p)$.
Since $X$ is the position vector, then, we have
$$
|X(p_k)|\leq\gamma(p_k)+|X(p_0)|
$$
By Cauchy-Schwarz inequality, we have
$$
\aligned 
&|\langle X(p_k),\nabla f(X(p_k)) \rangle |\leq  |\nabla f(X(p_k))|\cdot |X(p_k)| \\
&=\frac{2(f(X(p_k))-f(X(p_0))+1)\gamma(p_k)}{k(\gamma^2(p_k)+2)log(\gamma^2(p_k)+2)}\cdot |X(p_k)|\\
&\leq\frac{2(f(X(p_k))-f(X(p_0))+1)\gamma(p_k)
(\gamma(p_k)+|X(p_0)|)}{k(\gamma^2(p_k)+2)\log(\gamma^2(p_k)+2)}\\
&\leq \frac{2(f(X(p_k))-f(X(p_0))+1)}
{k \ \log(\gamma^2(p_k)+2)}+\frac{2(f(X(p_k))-f(X(p_0))+1)\gamma(p_k)|X(p_0)|}
{k (\gamma^2(p_k)+2) \log(\gamma^2(p_k)+2)}.
\endaligned
$$
According to (3.2) and the above inequality,  we have
$$
\lim_{k\rightarrow\infty}\limits| \langle X(p_k),\nabla f(X(p_k))\rangle|=0.
$$
Since $\mathcal{L}f=\Delta f- \langle X,\nabla f \rangle$, 
the above formula and (3.3) imply
$$
\limsup_{k\rightarrow\infty}\limits\mathcal{L}f(X(p_k))\leq 0.
$$
\end{proof}

\noindent Now we prove the theorem  1.1 as follows:

\vskip 2mm\noindent
\emph{Proof of Theorem 1.1.} 
Since $M^n$ is a
complete self-shrinker, the self-shrinker equation (1.2) is
equivalent to
\begin{equation}
H^\alpha=- \langle X,e_\alpha \rangle ,\quad n+1\leq\alpha\leq n+p.
\end{equation}
Taking covariant derivative of (3.4) with respect to
$e_i$, we have
\begin{equation}
H^\alpha_{,i}=\sum_k h^\alpha_{ik}\langle X,e_k \rangle ,\quad 1\leq
i\leq n,\quad n+1\leq\alpha\leq n+p.
\end{equation}
Furthermore, by taking covariant derivative of (3.5) with respect to
$e_j$, we have
\begin{equation}
\aligned H^\alpha_{,ij}&=\sum_k h^\alpha_{ikj}\langle
 X,e_k \rangle +h^\alpha_{ij}+\sum_{\beta,k}
h^\alpha_{ik}h^\beta_{kj}\langle X,e_\beta\rangle \\
&=\sum_kh^\alpha_{ikj}\langle X,e_k \rangle
+h^\alpha_{ij}-\sum_{\beta,k} H^\beta h^\alpha_{ik}h^\beta_{kj},
\endaligned
\end{equation}
According to  (3.6), we obtain 
\begin{equation}
\mathcal{L}|H|^2
=2|\nabla H|^2+2|H|^2-2\sum_{\alpha,\beta,i,k}H^\alpha H^\beta
h^\alpha_{ik}h^\beta_{ik}.
\end{equation}
In fact, 
\begin{equation*}
\aligned 
&\mathcal{L}|H|^2
=\Delta |H|^2-\langle x,\nabla |H|^2\rangle \\
&=2|\nabla H|^2+2\sum_{\alpha,i} H^\alpha H^{\alpha}_{,ii}
-2\sum_{\alpha,k}H^\alpha H^\alpha_{,k}\langle X,e_k \rangle \\
&=2|\nabla H|^2-2\sum_{\alpha,k}H^\alpha H^\alpha_{,k}\langle X,e_k \rangle\\
&+2\sum_\alpha H^\alpha\biggl(\sum_kH^\alpha_{,k}\langle
X,e_k \rangle +H^\alpha-\sum_{\beta,i,k}H^\beta h^\alpha_{ik}h^\beta_{ik}\biggl)
 \\
&=2|\nabla H|^2+2|H|^2-2\sum_{\alpha,\beta,i,k}H^\alpha H^\beta
h^\alpha_{ik}h^\beta_{ik}.
\endaligned
\end{equation*}
By the Cauchy-Schwarz inequality, we have
$$
|\sum_{\alpha,\beta,i,k}H^\alpha H^\beta
h^\alpha_{ik}h^\beta_{ik}|\leq |A|^2|H|^2.
$$
Hence, from (3.7) and the above inequality, we get 
\begin{equation}
\mathcal{L}|H|^2\geq 2|\nabla H|^2+2(1-|A|^2)|H|^2.
\end{equation}
\vskip2mm
\noindent
If $\sup |A|^2\geq 1$, there is  nothing to do.
From now, we assume that $\sup |A|^2< 1$.
Thus, 
$\sum_{\alpha,i,j}\limits (h^{\alpha}_{ij})^2 <1$. 
Together with
(2.3), it is easily seen that Ricci curvature is bounded from below.
Since $\frac{|H|^2}n\leq |A|^2 <1$ and by applying the generalized maximum principle for $\mathcal{L}$-operator
to the function $H^2$, 
we have, from (3.8)
$$
0\geq \limsup\mathcal{L}|H|^2\geq 2(1-\sup |A|^2) \sup |H|^2.
$$
Hence,
from $\sup |A|< 1$,  we have $\sup |H|^2=0$, that is,  $H \equiv 0$.
$M^n$ is totally geodesic. From (1.2), we know that $M^n$ is a smooth minimal cone. 
Hence, $M^n$ is  a  hyperplane and $|A|\equiv 0$.
\begin{flushright} 
$\square$
\end{flushright}

\vskip2mm
\noindent
{\it Proof of Theorem 1.2.} Since $|A|^2$ is bounded, we know that $H$ is bounded and 
the Ricci curvature is bounded from below by (2.3).
Without loss of generality, we can assume that $\inf H>0$ according 
to  $\inf H^2>0$. By a direct computation, we have 
$$
\mathcal LH=(1-|A|^2)H.
$$
Applying the generalized maximum principle for $\mathcal{L}$-operator to $-H$, we obtain
$$
0\leq (1-\inf |A|^2)\inf H.
$$
Since $\inf H>0$, we have $\inf |A|^2\leq 1$.
This finishes the proof of the theorem 1.2.
\begin{flushright} 
$\square$
\end{flushright}

\vskip 2mm
\noindent
{\it Proof of Corollary 1.2.}  
According to the theorem 1.2, we have $\inf |A|^2\leq 1$.
Since $H\neq 0$, we know that $M^n$ is not totally geodesic.
According to the theorem 1.1, we know $\sup |A|^2\geq 1$.
Since  $|A|^2$ is constant, we obtain 
 $|A|^2\equiv1$.
Since the codimension of $M^n$ is one,  we have
\begin{equation}
\dfrac12\mathcal L|A|^2=|\nabla A|^2+|A|^2(1-|A|^2).
\end{equation}
Indeed, since
\begin{equation*}
\begin{aligned}
&h^{n+1}_{ijkk}=h^{n+1}_{kkij}+\sum_mh^{n+1}_{mi}R_{mkjk}+\sum_m h^{n+1}_{km}R_{mijk},\\
\end{aligned}
\end{equation*}
we have 
\begin{equation*}
\begin{aligned}
&\Delta h^{n+1}_{ij}=\sum_kh^{n+1}_{kkij}+\sum_{m,k}h^{n+1}_{mi}R_{mkjk}+\sum_{m,k} h^{n+1}_{km}R_{mijk}\\
&=H_{,ij}+H\sum_{k}h^{n+1}_{ki}h^{n+1}_{kj}-|A|^2h^{n+1}_{ij}\\
&=\sum_kh^{n+1}_{ikj}\langle X,e_k \rangle+h^{n+1}_{ij}-|A|^2h^{n+1}_{ij}.\\
\end{aligned}
\end{equation*}
Hence, we have 
\begin{equation}
\begin{aligned}
&\mathcal L h^{n+1}_{ij}
=(1-|A|^2)h^{n+1}_{ij}.\\
\end{aligned}
\end{equation}
From (3.10), we infer
\begin{equation*}
\dfrac12\mathcal L|A|^2=|\nabla A|^2+|A|^2(1-|A|^2).
\end{equation*}
Therefore, from (3.9), we obtain $|\nabla A|^2\equiv 0$ since $ |A|^2\equiv 1$.
Namely, the second fundamental form of $M^n$ is parallel. 
According  to the theorem of Lawson \cite{L},  we know that 
$M^n$ is isometric to the round sphere $S^n(\sqrt{n})$  
or the cylinder $S^k(\sqrt{k})\times \mathbb{R}^{n-k}, \, 1\leq k \leq n-1$. 
\begin{flushright} 
$\square$
\end{flushright}
\vskip 5mm

\section{Self-shrinkers of dimension two and  three}

\noindent
In this section, we assume that  $X: M^n\to \mathbb{R}^{n+p}$ is a complete proper self-shrinker
with $n=2$ or $n=3$.  In \cite{DX2}, Ding and Xin have proved that a two dimensional  complete 
proper self-shrinker in $\mathbb {R}^3$ is a plane, a sphere or a cylinder.  By the theorems of Li and Wei
\cite{LW} and a simple observation, we can prove the following. 
\begin{theorem}
Let $X: M^3\to \mathbb{R}^{5}$ be a 3-dimensional complete proper self-shrinker without boundary
and with $H>0$.  If  the principal normal  $\nu =\frac{\mathbf{H}}{H}$ is parallel in the normal bundle of $M^3$
and the squared norm of the second fundamental form is constant, then $M^3$ is one of the following:
\begin{enumerate}
\item $S^k({\sqrt k})\times \mathbb{R}^{3-k}$, $1\leq k\leq  3$ with $|A|^2=1$,
\item  $S^1(1)\times S^1(1)\times \mathbb{R}^{}$ with $|A|^2=2$,
\item  $S^1(1)\times S^2(\sqrt2)$ with $|A|^2=2$,
\item  the three dimensional minimal  isoparametric Cartan hypersurface with $|A|^2=3$.
\end{enumerate}
\end{theorem}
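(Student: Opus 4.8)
The plan is to combine the structure theorem of Li and Wei \cite{LW} with the classification of minimal hypersurfaces of constant scalar curvature in low-dimensional spheres. First I would observe that the hypotheses---$M^3$ complete and proper, $H>0$, principal normal $\nu=\mathbf{H}/H$ parallel in the normal bundle, and $|A|^2$ constant (hence bounded)---are exactly of the type handled by \cite{LW}, which weakens Smoczyk's theorem \cite{S1}. Applying it, $M^3$ splits isometrically as
\begin{equation*}
M^3=\widetilde M^r\times\mathbb{R}^{3-r},\qquad 1\le r\le 3,
\end{equation*}
where $\widetilde M^r$ is a minimal submanifold of a sphere $S^{N}(\sqrt r)$ (for $r=1$ this is the Abresch--Langer case, so $\widetilde M^1=\Gamma$). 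Since $H>0$ everywhere, the spherical factor is nontrivial, so $r\ge 1$ and the fully flat (hyperplane) possibility is excluded.

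Next I would use a codimension count, which is the \emph{simple observation}. If $\widetilde M^r$ is minimal in $S^{r+c}(\sqrt r)\subset\mathbb{R}^{r+c+1}$, then $M^3=\widetilde M^r\times\mathbb{R}^{3-r}\subset\mathbb{R}^{c+4}$; because the ambient space is $\mathbb{R}^5$ we must have $c\le 1$. Hence each spherical factor has codimension at most one in its sphere: either $\widetilde M^r=S^r(\sqrt r)$ is the totally umbilic self-shrinker sphere ($c=0$), or $\widetilde M^r$ is a minimal hypersurface of $S^{r+1}(\sqrt r)$ ($c=1$). Properness of $M^3$ guarantees that $\widetilde M^r$ is a closed, hence compact, subset of its sphere, so the compact classifications below apply.

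Then I would invoke the constant-$|A|^2$ hypothesis to pin down each factor. For $r=1$ the Abresch--Langer curve \cite{AL} then has constant geodesic curvature, so it is the circle $S^1(1)$. For the genuine hypersurface factors, constant $|A|^2$ is equivalent (via the Gauss equation and minimality) to constant scalar curvature: in $S^3(\sqrt 2)$ ($r=2$) the only compact such minimal surfaces are the totally geodesic $S^2$ and the Clifford torus $S^1(1)\times S^1(1)$, and in $S^4(\sqrt 3)$ ($r=3$) the theorem of de Almeida--Brito forces the hypersurface to be isoparametric, hence totally geodesic, a Clifford torus $S^1(1)\times S^2(\sqrt 2)$, or the minimal Cartan hypersurface. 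The totally geodesic subcases fall back into the sphere families.

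Finally I would assemble the pieces and compute $|A|^2$. For a minimal $\widetilde M^r\subset S^{r+c}(\sqrt r)$ the self-shrinker second fundamental form splits into the intrinsic spherical part plus the umbilic part coming from $S^{r+c}(\sqrt r)\subset\mathbb{R}^{r+c+1}$, the latter contributing exactly $1$ to $|A|^2$; together with the flat factor this yields $|A|^2=1$ for the spheres $S^k(\sqrt k)\times\mathbb{R}^{3-k}$, $|A|^2=2$ for $S^1(1)\times S^1(1)\times\mathbb{R}$ and for $S^1(1)\times S^2(\sqrt 2)$, and $|A|^2=3$ for the Cartan example (whose principal curvatures in $S^4(\sqrt 3)$ are $1,0,-1$). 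Collecting the admissible products gives exactly the four families in the statement. The main obstacle I expect is the $r=3$ step: extracting compactness from properness and then invoking the de Almeida--Brito isoparametric classification, together with correctly tracking the value of $|A|^2$ through the homothety from the unit sphere to $S^{r+1}(\sqrt r)$ and through the self-shrinker embedding into $\mathbb{R}^5$.
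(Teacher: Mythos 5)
Your proposal follows the paper's proof almost line for line: properness plus the Li--Wei splitting theorem, the codimension count (this is precisely the paper's ``simple observation'') reducing the spherical factor to a hypersurface of $S^{r+1}(\sqrt r)$, the circle for the Abresch--Langer factor, and the classification of compact minimal hypersurfaces with constant $|A|^2$ in $S^{3}(\sqrt 2)$ and $S^{4}(\sqrt 3)$; your computations of $|A|^2$ for each family agree with the statement. Two steps, however, are not justified as you state them. First, Theorem 1.1 of \cite{LW} is a theorem about complete self-shrinkers with \emph{polynomial volume growth}, not about proper ones; the paper's first move is to convert properness into polynomial volume growth via Ding--Xin \cite{DX1} (or X. Cheng--Zhou \cite{CZ}), and without that bridge your appeal to \cite{LW} is unsupported. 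Second, for the $r=3$ factor you invoke de Almeida--Brito, but their theorem assumes the constant scalar curvature is \emph{nonnegative}, and nothing in your argument rules out a priori a compact minimal hypersurface of $S^{4}(\sqrt 3)$ with constant $|\tilde A|^2$ so large that the scalar curvature is negative. What is needed is Chang's unconditional solution of Chern's conjecture for $n=3$ \cite{C1} --- compact minimal hypersurfaces of $S^{4}$ with constant scalar curvature are isoparametric --- which is exactly what the paper cites. With those two citations repaired, your argument coincides with the paper's; the only genuine (and harmless) divergence is that you absorb the totally geodesic subcases directly into family (1), whereas the paper disposes of the whole range $|A|^2\le 1$ at once by the gap theorem of Cao--Li \cite{CL1}.
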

\begin{proof}
Since $M^3$ is a complete proper self-shrinker, we know that $M^3$
has polynomial volume growth from the result of Ding and Xin \cite{DX1} or 
X. Cheng and Zhou \cite{CZ}. 
Thus, from the theorem 1.1 of Li and Wei \cite{LW}, we know that $M^3$ is isometric
to $\Gamma\times \mathbb{R}^{2}$ or
$\tilde{M}^r\times  \mathbb{R}^{3-r}$, where  $\Gamma$ is an
Abresch-Langer curve and $\tilde{M}$ is a compact  minimal hypersurface in
sphere $S^{r+1}(\sqrt r)$. 

\noindent
Since $|A|^2$ is constant, then the 
Abresch-Langer curve $\Gamma$  must be a circle. In this case,
$M^3$ is isometric to $S^1({1})\times \mathbb{R}^{2}$. 

\noindent
If $|A|^2\leq 1$, from the results of Cao and Li \cite{CL1}, we have 
$|A|^2=1$ and $M^3$ is  $S^k({\sqrt k})\times \mathbb{R}^{3-k}$, $1\leq k\leq  3$.
Hence, we can only consider the case of $|A|^2>1$.

\noindent
When $r=2$,
$\tilde{M}$ is a compact minimal surface in
sphere $S^{3}(\sqrt 2)$ with the squared norm of the second fundamental form
$|\tilde A|^2=|A|^2-1$. Thus, $\tilde{M}$ is the Clifford torus  $S^1(1)\times S^1(1)$
in $S^{3}(\sqrt 2)$. 

\noindent
When $r=3$,
$\tilde{M}$ is a compact minimal hypersurface in
sphere $S^{4}(\sqrt 3)$ with a constant squared norm of the second fundamental form, that is, 
$|\tilde A|^2=|A|^2-1$. Thus, $\tilde{M}$ is the Clifford torus $S^1(1)\times S^2(\sqrt2)$ in 
 $S^{4}(\sqrt 3)$  with $|A|^2=2$ or 
 the three dimensional minimal  isoparametric Cartan hypersurface in  $S^{4}(\sqrt 3)$  with $|A|^2=3$
according to the solution of Chern's conjecture for $n=3$ in \cite{C1}. This finishes the proof of the 
theorem 4.1.
\end{proof}
\begin{theorem}
Let $X: M^2\to \mathbb{R}^{2+p} $  {\rm (}$p\geq 1${\rm)} be a 2-dimensional complete proper self-shrinker without boundary
and with $H>0$.  If  the principal normal  $\nu =\frac{\mathbf{H}}{H}$ is parallel in the normal bundle of $M^2$
and the squared norm of the second fundamental form is constant, then $M^2$ is one of the following:
\begin{enumerate}
\item $S^k({\sqrt k})\times \mathbb{R}^{2-k}$, $1\leq k\leq  2$ with $|A|^2=1$,
\item the  Boruvka sphere $S^2({\sqrt {m(m+1)}})$ in $S^{2m}({\sqrt 2})$ with $p=2m-1$ and 
$|A|^2=2-\frac{2}{m(m+1)}$,
\item  a compact flat minimal surface in $S^{2m+1}(\sqrt2)$ with $p=2m$ and $|A|^2=2$.
\end{enumerate}
\end{theorem}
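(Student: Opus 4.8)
The plan is to follow the same strategy as in the proof of Theorem 4.1, only replacing the classification of the minimal factor as a codimension-one hypersurface by the classification of minimal \emph{surfaces} of constant Gaussian curvature in spheres. First I would observe that, being a complete proper self-shrinker, $M^2$ has polynomial volume growth by Ding and Xin \cite{DX1} (or X. Cheng and Zhou \cite{CZ}). Since $H>0$ and the principal normal $\nu=\mathbf H/H$ is parallel in the normal bundle, the splitting theorem of Li and Wei \cite{LW} then applies and forces $M^2$ to be isometric either to $\Gamma\times\mathbb R$, with $\Gamma$ an Abresch--Langer curve, or to $\tilde M^r\times\mathbb R^{2-r}$, where $\tilde M^r$ is a compact minimal submanifold of a round sphere $S^{r+q}(\sqrt r)$ of radius $\sqrt r$.

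Next I would dispose of the low-dimensional factors. In the first case the constancy of $|A|^2$ forces $\Gamma$ to be a circle, so $M^2=S^1(1)\times\mathbb R$, the $k=1$ member of family (1); the subcase $r=1$ gives a great circle and the same cylinder. The genuinely two-dimensional case is $r=2$, where $M^2=\tilde M^2$ is a compact minimal surface in $S^{2+q}(\sqrt2)\subset\mathbb R^{3+q}=\mathbb R^{2+p}$, so that $p=q+1$. Here the essential computation is that the second fundamental form of $S^{2+q}(\sqrt2)$ in $\mathbb R^{3+q}$ contributes exactly $2\cdot(1/\sqrt2)^2=1$ along the two tangent directions of $\tilde M^2$, and is orthogonal to the part tangent to the sphere, whence
\begin{equation*}
|\tilde A|^2=|A|^2-1,
\end{equation*}
again a constant.

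I would then pass to intrinsic geometry: for a minimal surface in a space form of constant curvature $c=\tfrac12$ the Gauss equation reads $K=c-\tfrac12|\tilde A|^2$, so
\begin{equation*}
K=\tfrac12\bigl(2-|A|^2\bigr)
\end{equation*}
is constant. Thus $\tilde M^2$ is a compact minimal surface of constant Gaussian curvature in a sphere, and I would invoke the classical classification of such surfaces (Calabi, Kenmotsu, Wallach, Chern--do Carmo--Kobayashi): necessarily $K\geq0$, with $K>0$ realised only by the standard Boruvka (Veronese-type) minimal sphere $S^2(\sqrt{m(m+1)})$ full in $S^{2m}(\sqrt2)$, and $K=0$ realised only by a flat minimal torus full in the odd sphere $S^{2m+1}(\sqrt2)$. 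Equating the intrinsic curvatures, $\tfrac{1}{m(m+1)}=\tfrac12(2-|A|^2)$ yields $|A|^2=2-\tfrac{2}{m(m+1)}$ together with $p=2m-1$, which is conclusion (2), while $K=0$ yields $|A|^2=2$ with $p=2m$, which is conclusion (3); in the borderline regime $|A|^2\leq1$ the gap theorem of Cao and Li \cite{CL1} recovers the totally geodesic sphere of family (1).

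The main obstacle is this last step. One must (i) confirm that the Li--Wei splitting presents $\tilde M^2$ as a \emph{full} submanifold of the asserted ambient subsphere, so that the codimension count yielding $p=2m-1$ and $p=2m$ is exact, and (ii) exclude $K<0$, that is $|A|^2>2$ --- which is precisely the assertion in the constant-curvature classification that no compact minimal surface of negative constant curvature sits in a sphere. The remaining delicacy is purely bookkeeping: one has to keep the extrinsic radius $\sqrt2$ of the ambient subsphere distinct from the intrinsic radius $\sqrt{m(m+1)}$ of the Boruvka sphere, but both are pinned down by the single Gauss-equation identity above.
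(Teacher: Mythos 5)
Your proposal is correct and takes essentially the same route as the paper: polynomial volume growth via Ding--Xin (or X.~Cheng--Zhou), the Li--Wei splitting into $\Gamma\times\mathbb{R}$ or a compact minimal surface $\tilde M^2\subset S^{p+1}(\sqrt2)$, the identity $|\tilde A|^2=|A|^2-1$ turning constant $|A|^2$ into constant Gauss curvature, the Bryant (cf.\ Calabi, Kenmotsu, Wallach) classification of constant-curvature minimal surfaces in spheres, and the Cao--Li gap theorem for $|A|^2\leq1$. The only differences are expository: you spell out the Gauss-equation bookkeeping, the fullness/codimension count, and the exclusion of $K<0$, all of which the paper subsumes into the cited classification.
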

\begin{proof}
Since $M^2$ is a complete proper self-shrinker, we know that $M^2$
has polynomial volume growth from the result of Ding and Xin \cite{DX1}  or X. Cheng and Zhou \cite{CZ}. 
Thus, from the theorem 1.1 of Li and Wei \cite{LW}, we know that $M^2$ is isometric
to $\Gamma\times \mathbb{R}^{1}$ or
$\tilde{M}^2$, where  $\Gamma$ is an
Abresch-Langer curve and $\tilde{M}$ is a compact  minimal surface in
sphere $S^{p+1}(\sqrt 2)$. 

\noindent
Since $|A|^2$ is constant, then the 
Abresch-Langer curve $\Gamma$  must be a circle. In this case,
$M^2$ is isometric to $S^1({1})\times \mathbb{R}$. 

\noindent
If $|A|^2\leq 1$, from the results of Cao and Li \cite{CL1}, we have 
$|A|^2=1$ and $M^2$ is  $S^k({\sqrt k})\times \mathbb{R}^{2-k}$, $1\leq k\leq  2$.
Hence, we can only consider the case of $|A|^2>1$.

\noindent
Since $\tilde{M}$ is a compact minimal surface in
sphere $S^{p+1}(\sqrt 2)$ with a constant squared norm of the second fundamental form,
that is, $|\tilde A|^2=|A|^2-1$. Thus, $\tilde{M}$ is a compact minimal surface in
sphere $S^{p+1}(\sqrt 2)$ with constant Gauss curvature. According to the classification
of minimal surface in sphere $S^{p+1}(\sqrt 2)$ with constant Gauss curvature due to 
Bryant \cite{B} (cf. Calabi \cite{C}, Kenmotsu \cite{K} and Wallach \cite{W}),
we know that $M^2$ is isometric to 
a  Boruvka sphere $S^2({\sqrt {m(m+1)}})$ in $S^{2m}({\sqrt 2})$ with $p=2m-1$ and 
$|A|^2=2-\frac{2}{m(m+1)}$
or  a compact flat minimal surface in $S^{2m+1}(\sqrt2)$ with $p=2m$ and $|A|^2=2$.
This finishes the proof of the  theorem 4.2.
\end{proof}

\end{document}